\numberwithin{equation}{section}
\theoremstyle{plain}
\newtheorem{thm}{Theorem}[section]
\newtheorem{lem}{Lemma}[section]
\newtheorem{cor}{Corollary}[section]
\newtheorem{prop}{Propositon}[section]
\theoremstyle{definition}
\theoremstyle{remark}
\newtheorem{rem}{Remark}
\begin{document}
	\rightline{\baselineskip16pt\rm\vbox to20pt{
			{
				\hbox{OCU-PHYS-516}
				\hbox{AP-GR-159}
			}
			\vss}}%

	\begin{center}
		{\LARGE\bf Characterization of 3D Sasakian manifold from magnetic Hopf surfaces}\\
		\bigskip\bigskip
		{\large 
			Satsuki Matsuno\footnote{smatsuno@sci.osaka-cu.ac.jp}
		}\\
		\bigskip
		{\it Department of Physics, Graduate School of Science,
			Osaka City University\\
			3-3-138 Sugimoto, Sumiyoshi, Osaka 558-8585, Japan}
	\end{center}
	
%
%
	\begin{abstract}
		In a three-dimensional Riemannian manifold M that admits a unit Killing vector field $\xi$, we regard $\xi$ as a magnetic vector field.
		A magnetic Hopf surface is a surface obtained by Lie dragging the magnetic curve with $\xi$. 
		Then we characterize Sasakian structure on M from magnetic Hopf surfaces.
		That is, we show that if an arbitrary magnetic Hopf surface is a constant mean curvature surface then M is a Sasakian manifold.
	\end{abstract}

	\section{Introduction}
	If one observes trajectories of charged particles in a magnetic field in a Riemannian manifold, one can obtain information of both the magnetic field and geometry of the manifold.
	In this paper we discuss such an attempt for a static magnetic field in three dimensions in simple mathematical settings.
	
	A magnetic field in a Riemannian manifold $(M,g)$ is defined as a closed 2-form $F$.
	By the use of $F$, the $(1,1)-$type tensor field $\phi$ is defined by $g(X,\phi Y)=F(X,Y)$, and the Lorentz force that works on a particle of velocity $v$ and charge $q$ is given by $q\phi(v)$.
	An affine-parametrized trajectory $c(t)$ of the charged particle satisfies the differential equation $\nabla_{\dot{c}}\dot{c}=q\phi(\dot{c})$, which is called a magnetic curve for the magnetic field $F$.
	
	On a given manifold and a given magnetic field, a magnetic curve depends on both the geometry and the magnetic field.
	However, if we consider the magnetic field that is tied uniquely to the geometry of a manifold, a magnetic curve is essentially governed by the geometry.
	The magnetic curves for such a magnetic field are useful probes for investigating the geometry of manifold.
	There are well known examples for such cases: 
	K\"{a}hlerian magnetic field on a K\"{a}hlerian manifold, where we regard the K\"{a}hler form as a magnetic field and Sasakian magnetic field on a Sasakian manifold, where we also define the magnetic field by using the Sasakian structure.
	The magnetic curves were studied in the K\"{a}hlerian magnetic field \cite{Comtet,Sunada,Adachi 1995}, and in the Sasakian magnetic field\cite{Cabrerizo et al 2009,Romaniuc et al 2015}.
	
	An importance of K\"{a}hlerian and Sasakian manifolds is recognized in physics.
	K\"{a}hlerian manifolds play an important role of Super symmetric field theory and compactifications of extradimension.
	And five-dimensional Sasakian-Einstein manifolds are important in context of AdS/CFT correspondence\cite{Maldacena:1997re,Klebanov:1998hh}.
	In recent years, the significance of the Sasakian manifold has been revisited from the mathematical side.
	
	In general, we might not be able to uniquely determine both a manifold and a magnetic field by information of magnetic curves.
	However, we can characterize a Sasakian manifold and a Sasakian magnetic field by magnetic curves since the geometry of manifold and the magnetic field are essentially identical.
	Infact, it is shown that if any magnetic curves in the contact magnetic field of a three-dimensional contact metric manifold are slant helices then the manifold is a Sasakian manifold(\cite{Cabrerizo et al 2009} Theorem 5.3).
	The aim of this paper is giving another characterization of the Sasakian manifold.
	
	We consider a three-dimensional Riemannian manifold $(M,g)$ admits a unit Killing vector field $\xi$, and regard $\xi$ as the magnetic vector field.
	This magnetic vector field is a generalization of a translation invariant magnetic vector field in the three-dimensional Euclidian space.
	In this setting, we investigate the condition for magnetic curves that specify the Sasakian manifold. 
	
	On three-dimensional manifolds, we can simplify the problem since we can identify a magnetic 2-form field and a magnetic vector field.
	Moreover, we use the Killing submersion as a tool effectively.
	The Riemannian submersion $\pi:M\rightarrow B$ whose fibers are integral curves of a unit Killing vector field is called a Killing submersion\cite{Espinar Oliveira 2013,Manzano 2014}．
	The remarkable geometric property of the three-dimensional Riemannian manifolds that admit a unit Killing vector field $\xi$ is that the sectional curvature of arbitrary planes containing $\xi$ depend on only an integral curve of $\xi$ \cite{Espinar Oliveira 2013,Manzano 2014}．
	This fact is useful in our study since existence of a unit Killing vector field $\xi$ and constancy of sectional curvature of planes containing $\xi$ are equivalent to being a Sasakian manifold in three-dimensional Riemannian manifolds.
	
	For characterization, the geometric object that we focus on is the magnetic surface that is an alternative of magnetic tube.
	A magnetic tube is constructed by an arbitrary closed curve and magnetic field lines.
	The magnetic surface is a certain generalization of magnetic tube.
	To define magnetic surface, we use magnetic curve instead of closed curve, i.e. the surface which is constructed by a magnetic curve and magnetic field lines.
	A magnetic surface can be a "tube".
	As a main result of this paper, we show that if an arbitrary magnetic surface has constant mean curvature then the manifold has a Sasakian structure.
	
	The organization of this paper is following.
	In Section.\ref{section Preliminary}, we sammarize some definitions and facts in magnetic field, magnetic curve, Sasakian manifold, Riemannian submersion and Killing submersion.
	In Section.\ref{section Characterization of Sasaki structure from view point of magnetic Hopf surfaces}, we prove our result.
	\section{Preliminary}\label{section Preliminary}
	\subsection{Magnetic Fields and Magnetic Curves}
	In a Riemannian manifold $(M,g)$, a magnetic field $F$ is a closed 2-form. 
	There locally exists a 1-form $A$, called an electromagnetic potential, such that $F=dA$.
	The tensor field $\phi$ of $(1,1)-$type is defined by $g(X,\phi Y)=F(X,Y)$ for arbitrary vector fields $X,Y$.
	Then $\phi$ gives the Lorentz force in the physical terminology.
	
	An affine-parametrized trajectory $c(t)$ of charge $q$ is described by the equation
	\begin{align}
	\nabla_{\dot{c}}\dot{c}=q\phi(\dot{c}).
	\end{align}
	This curve is called a magnetic curve for the magnetic field $F$.
	The speed of a magnetic curve $||\dot{c}(t)||$ is constant.
	A magnetic curve of unit speed and unit charge is called a normal magnetic curve.
	
	Since a magnetic curve is a generalization of geodesic, one can consider the Jacobi field and completeness for magnetic curves as same as the geodesics\cite{Adachi 1997}．
	Let $\gamma_{v_0}(t)$ be a magnetic curve of initial velocity $v_0$ and unit charge for a magnetic field $F$. 
	We should note that $\gamma_{v_0}(t)=\gamma_{av_0}(t/a)$ dose not hold for $a(\ne1)\in\mathbb{R}$.
	This is because physically the magnitude of Lorentz force depends on the speed of a charged particle.
	If $\gamma_{v_0}(t)$ is a magnetic curve of a magnetic field $F$ then $\gamma_{av_0}(t/a)$ is the magnetic curve for the magnetic field $aF$ or the magnetic curve of charge $1/a$ for the magnetic field $F$.
	
	Considering this circumstances, for fixed $F$ the magnetic exponential map $\mathbb{F}{\rm exp}$ is defined as follows.
	For an arbitrary point $p\in M$ the magnetic exponential map from a neighborhood V of $T_pM$ containing $o_p$ to a neighborhood U of $p$, $\mathbb{F}{\rm exp}_p:V\rightarrow U$, is defined by
	\begin{align}
	\mathbb{F}{\rm exp}_pv:=\begin{cases}
	\gamma_{v_0}(||v||)\ \ {\rm if}\ v\ne0\\
	p\ \ \ \ \ \ \ \ \ \ \ {\rm if}\ v=0
	\end{cases}
	\end{align}
	where $v_0=v/||v||$.
	
	The differential map of $\mathbb{F}{\rm exp}_p$ on $o_p\in T_pM$, $D\mathbb{F}{\rm exp}_po_p:T_{o_p}T_pM\simeq T_pM\rightarrow T_pM$, is identity. 
	Indeed,
	\begin{align}
	D\mathbb{F}{\rm exp}_po_p(v)=\frac{d}{dt}\gamma_{v_0}(t||v||)|_{t=0}=||v||v_0=v,
	\end{align}
	holds.
	
	Therefore the next lemma is obtained by inverse function theorem.
	\begin{lem}
		[\cite{Adachi 1997} Lemma 1.4 (3)]
		
		We can find a neighborhood $U$ of $p\in M$ and a positive number $\varepsilon$ such that $\mathbb{F}{\rm exp}_p|_{B_\varepsilon(o_p)}$ is a diffeomorphism onto some open set containing $U$, where $B_\varepsilon(o_p)$ is an $\varepsilon-$open ball.
	\end{lem}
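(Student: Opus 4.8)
The plan is to apply the inverse function theorem to $\mathbb{F}{\rm exp}_p$ at the zero vector $o_p$. The preceding computation already supplies the value of the differential: along each ray $t\mapsto tv$ in $T_pM$ one has $\mathbb{F}{\rm exp}_p(tv)=\gamma_{v_0}(t\|v\|)$, so that $D\mathbb{F}{\rm exp}_po_p(v)=v$, i.e.\ $D\mathbb{F}{\rm exp}_po_p$ is the identity on $T_{o_p}T_pM\simeq T_pM$ and in particular a linear isomorphism. Granting that $\mathbb{F}{\rm exp}_p$ is $C^1$ (indeed smooth) on a neighborhood of $o_p$, the inverse function theorem then produces an open set $W\ni o_p$ on which $\mathbb{F}{\rm exp}_p$ restricts to a diffeomorphism onto the open set $\mathbb{F}{\rm exp}_p(W)$, which contains $p=\mathbb{F}{\rm exp}_p(o_p)$.

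The step that genuinely requires care, and the one I expect to be the main obstacle, is the regularity of $\mathbb{F}{\rm exp}_p$ near $o_p$. The magnetic-curve equation $\nabla_{\dot{c}}\dot{c}=\phi(\dot{c})$ is a second-order ODE, so by smooth dependence of solutions on initial data the map $(t,w)\mapsto c_w(t)$, where $c_w$ is the unit-charge magnetic curve with $c_w(0)=p$ and $\dot{c}_w(0)=w$, is smooth jointly in $t$ and $w$. The difficulty is that $\mathbb{F}{\rm exp}_p(v)=c_{v/\|v\|}(\|v\|)$ is expressed through the nonsmooth quantities $\|v\|$ and $v/\|v\|$, and the definition is moreover piecewise at $v=0$. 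Unlike the Riemannian exponential map, magnetic curves do not obey the reparametrization law $\gamma_{av_0}(t/a)=\gamma_{v_0}(t)$ (as emphasized above), so one cannot simply rewrite $\mathbb{F}{\rm exp}_p(v)=c_v(1)$ and read off smoothness at the origin. This smoothness is precisely what is established in the construction of the magnetic exponential map in \cite{Adachi 1997}, which I would invoke directly, so that the value of the differential computed along rays is indeed the value of the genuine differential and the hypotheses of the inverse function theorem are met.

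With smoothness in hand, it remains only to extract the stated data. Since $W$ is open and contains $o_p$, I choose $\varepsilon>0$ with the open ball $B_\varepsilon(o_p)\subset W$; the restriction $\mathbb{F}{\rm exp}_p|_{B_\varepsilon(o_p)}$ is then a diffeomorphism onto the open set $\mathbb{F}{\rm exp}_p(B_\varepsilon(o_p))$, which contains $p$. Taking $U$ to be any open neighborhood of $p$ contained in this image (for instance the image itself) yields a neighborhood $U$ of $p$ and a positive number $\varepsilon$ such that $\mathbb{F}{\rm exp}_p|_{B_\varepsilon(o_p)}$ is a diffeomorphism onto an open set containing $U$, as required.
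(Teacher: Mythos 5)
Your proposal is correct and follows essentially the same route as the paper: the computation preceding the lemma shows $D\mathbb{F}{\rm exp}_p o_p$ is the identity, and the paper then simply invokes the inverse function theorem, exactly as you do. Your additional discussion of the smoothness of $\mathbb{F}{\rm exp}_p$ at $o_p$ (where the formula involves $\|v\|$ and $v/\|v\|$ and the usual reparametrization trick fails for magnetic curves) is a genuine subtlety that the paper leaves implicit by citing \cite{Adachi 1997}, and your handling of it is appropriate.
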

	This lemma guarantees the following.
	\begin{lem}
		\label{lem local completeness of magnetic exponential map}
		Let $(M,g)$ be a complete Riemannian manifold and let $F$ be a magnetic field 2-form.
		Then two arbitrary points $p,q$ are connected by a piecewise smooth normal magnetic curve for the magnetic field F.
	\end{lem}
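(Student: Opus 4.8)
The plan is to run the magnetic analogue of the Hopf--Rinow connectedness argument. Fix $p\in M$ (which we take to be connected, as usual) and consider the set
\[
\Omega_p:=\{\,q\in M:\ q\ \text{is joined to}\ p\ \text{by a piecewise smooth normal magnetic curve}\,\}.
\]
I would show that $\Omega_p$ is nonempty, open and closed; connectedness of $M$ then forces $\Omega_p=M$, which is exactly the claim. Nonemptiness is immediate since $p\in\Omega_p$.

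For openness I would use the local-diffeomorphism property of $\mathbb{F}{\rm exp}$ established in the preceding lemma. If $q\in\Omega_p$, there is a ball $B_\varepsilon(o_q)\subset T_qM$ on which $\mathbb{F}{\rm exp}_q$ is a diffeomorphism onto an open neighbourhood $W$ of $q$. Any $w\in W\setminus\{q\}$ equals $\mathbb{F}{\rm exp}_q v=\gamma_{v_0}(\|v\|)$ with $v_0=v/\|v\|$ a unit vector, so $w$ is reached from $q$ by a single normal magnetic curve (unit speed, unit charge). Concatenating this with a path realizing $q\in\Omega_p$ gives $W\subset\Omega_p$, whence $\Omega_p$ is open.

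Closedness is where the main work lies and where completeness enters. First I would note that completeness makes every unit-speed magnetic curve extendable to all parameter values: on a bounded parameter interval such a curve has bounded length, so it stays in a closed bounded—hence compact—set, and the solution of the magnetic ODE therefore cannot escape in finite time. Now take $q\in\overline{\Omega_p}$ and a sequence $q_n\in\Omega_p$ with $q_n\to q$. The obstacle is that, unlike for geodesics, the reversal of a normal magnetic curve is a magnetic curve of charge $-1$, so I cannot simply reverse a curve from $q$ to a nearby $q_n$. Instead I would upgrade the preceding lemma to a uniform version: by smooth dependence of the magnetic flow on its initial point, together with the inverse function theorem with parameters, there exist $\delta,\varepsilon'>0$ such that for every $q'$ with $d(q',q)<\delta$ the map $\mathbb{F}{\rm exp}_{q'}$ is a diffeomorphism of $B_{\varepsilon'}(o_{q'})$ onto a set containing the metric ball of radius $\varepsilon'/2$ about $q'$. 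Choosing $n$ large so that $d(q_n,q)<\min(\delta,\varepsilon'/2)$, the point $q$ lies in the image of $\mathbb{F}{\rm exp}_{q_n}$, which yields a normal magnetic curve from $q_n$ to $q$; concatenating it with a path from $p$ to $q_n$ gives $q\in\Omega_p$. Hence $\Omega_p$ is closed.

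Combining nonemptiness, openness and closedness with the connectedness of $M$ gives $\Omega_p=M$, so every $q$ is joined to $p$ by a piecewise smooth normal magnetic curve. I expect the uniform local-diffeomorphism estimate in the closedness step to be the only delicate point—precisely because magnetic curves are not reversible within the class of normal magnetic curves—while the remaining steps follow directly from the preceding lemma and the completeness hypothesis.
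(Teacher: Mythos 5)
Your proof is correct. The paper gives no argument for this lemma beyond the remark that it is ``guaranteed'' by the preceding local-diffeomorphism property of $\mathbb{F}{\rm exp}$, so your open--closed argument is exactly the fleshing-out the author leaves implicit, and you have correctly isolated the one genuinely delicate point: because the reversal of a normal magnetic curve is a magnetic curve of charge $-1$ rather than a normal one, the closedness step cannot be done by running a short curve from the limit point $q$ back to a nearby $q_n$, and the uniform (parameterized) inverse-function-theorem statement you invoke --- the magnetic analogue of uniformly normal neighborhoods, obtained by applying the inverse function theorem to $(q',v)\mapsto(q',\mathbb{F}{\rm exp}_{q'}v)$ at $(q,o_q)$ --- is precisely what is needed. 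One minor remark: that uniform estimate only uses smooth dependence of the flow on initial data over a compact neighborhood of $q$, so completeness is not really consumed where you place it; the argument in fact proves the statement for any connected Riemannian manifold, which does not harm the lemma as stated.
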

	In particular, for a two dimensional Riemannian manifold, we have
	\begin{lem}
		\label{lem local completeness of circle on 2dim}
		Let $(M,g)$ be an orientable complete two-dimensional Riemannian manifold and let $\Omega$ is a volume form.
		Then two arbitrary points $p,q\in M$ are connected by a piecewise smooth curve $\alpha(t):I=[a,b]\rightarrow M$ such that there is a partition $a=t_0<t_1<\cdots<t_{m-1}<t_m=b$ so that $\alpha(t)|_{[t_{k-1},t_k]}$ is the smooth curve satisfies
		\begin{align}
		\nabla_{\dot{\alpha}}\dot{\alpha}=\kappa n,\label{eq circle 1}\\
		\lim_{t\rightarrow t_{k-1}+0}\dot{\alpha}(t)=v_{k-1}^+,\ \lim_{t\rightarrow t_k-0}\dot{\alpha}(t)=v_k^-\label{eq circle 2}\\
		|v_{k-1}^+|=|v_k^-|=a={\rm const.}\label{eq circle 3}
		\end{align}
		for all $k\in\{1,\cdots,m \}$.
		Here $n$ is the unit vector field on $\alpha$ defined by $g(\dot{\alpha},n)=0,\ \Omega(\dot{\alpha},n)>0$ and $\kappa\ne0$ is a fixed constant.
		We call the curves satisfying the equation \eqref{eq circle 1} circles.
	\end{lem}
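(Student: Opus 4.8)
The plan is to observe that on a two-dimensional manifold the equation \eqref{eq circle 1} is nothing but the equation of a normal magnetic curve for a suitably chosen magnetic field, and then to invoke Lemma \ref{lem local completeness of magnetic exponential map}. Since $M$ is two-dimensional, the volume form $\Omega$ is a top-degree form and hence automatically closed, so $F:=\kappa\,\Omega$ is a legitimate magnetic field 2-form; it is nonzero precisely because $\kappa\ne0$. I would then read off the associated Lorentz operator $\phi$ from $g(X,\phi Y)=F(X,Y)=\kappa\,\Omega(X,Y)$, obtaining $\phi=\kappa J$, where $J$ is the rotation by $\pi/2$ determined by $g$ and the orientation. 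By construction the unit vector $J\dot\alpha/\|\dot\alpha\|$ is, after fixing the sign of $\Omega$ (equivalently, replacing $F$ by $-F$) if necessary, the normal field $n$ of the statement, characterised by $g(\dot\alpha,n)=0$ and $\Omega(\dot\alpha,n)>0$.

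With this choice a unit-speed, unit-charge curve $c$ satisfies the magnetic curve equation $\nabla_{\dot c}\dot c=\phi(\dot c)=\kappa\,n$, which is exactly \eqref{eq circle 1} with common speed $a=1$. In other words, the normal magnetic curves of $F=\kappa\,\Omega$ are precisely the unit-speed circles of constant geodesic curvature $\kappa$. The hypothesis $\kappa\ne0$ is what guarantees that these are genuine circles rather than geodesics.

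Next I would apply Lemma \ref{lem local completeness of magnetic exponential map} to the complete manifold $(M,g)$ and the magnetic field $F=\kappa\,\Omega$: any two points $p,q\in M$ are joined by a piecewise smooth normal magnetic curve $\alpha$. Restricting $\alpha$ to each subinterval of its partition gives a smooth circle satisfying \eqref{eq circle 1}, and since every piece is a normal (hence unit-speed) magnetic curve, the one-sided velocities appearing in \eqref{eq circle 2} all have length $a=1$, so \eqref{eq circle 3} holds automatically. This produces the desired piecewise-circle connecting $p$ and $q$ and completes the argument.

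The only point requiring real care is the orientation bookkeeping in the first paragraph: one must verify that the unit normal produced by $\phi=\kappa J$ coincides, up to the sign of $F$, with the normal $n$ fixed by $\Omega(\dot\alpha,n)>0$ in the statement. This is the step I would check most carefully, but it amounts merely to a definite choice of the sign of the magnetic field and involves no substantive difficulty; everything else is an immediate specialization of Lemma \ref{lem local completeness of magnetic exponential map}.
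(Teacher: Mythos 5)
Your proposal is correct and follows essentially the same route as the paper: both identify the circle equation \eqref{eq circle 1} with the normal magnetic curve equation for the K\"ahler magnetic field $F$ proportional to $\Omega$ (the paper uses the operator $J$ defined by $g(JX,Y)=\Omega(X,Y)$ to check $J\dot{\alpha}=an$ and thus fix the orientation sign you flag) and then invoke Lemma \ref{lem local completeness of magnetic exponential map}. The only cosmetic difference is that you normalize to unit speed $a=1$ while the paper keeps a general constant speed and absorbs it into the coefficient of $\Omega$; this does not affect the validity of the existence claim.
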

	\begin{proof}
		Define the $(1,1)-$type tensor field $J$ by $g(JX,Y)=\Omega(X,Y)$, then $(M,J,g)$ is a K\"{a}hlerian manifold of K\"{a}hler form $\Omega$.
		For a circle that satisfies equations \eqref{eq circle 1},\eqref{eq circle 2},\eqref{eq circle 3}, $J(\dot{\alpha})$ is in proportion to $n$ since $g(J\dot{\alpha},\dot{\alpha})=0$.
		Moreover, since we have $g(J(\dot{\alpha}),J(\dot{\alpha}))=g(\dot{\alpha},\dot{\alpha})=a$, and $g(J\dot{\alpha},n)=\Omega(\dot{\alpha},n)>0$, thus $J\dot{\alpha}=an$ holds.
		Then the curve $\alpha(t)$ is a piecewise $C^2$ normal magnetic curve for the K\"{a}hler magnetic field $F:=\kappa/a^3\Omega$.
		Therefore we obtain our claim by lemma.\ref{lem local completeness of magnetic exponential map}.
	\end{proof}
	\begin{rem}
		In general, the word "circle" means an arc-length-parametrized curve satisfying the equation \eqref{eq circle 1}\cite{Nomizu Yano 1974}．
	\end{rem}
	
	\subsection{Killing Magnetic Field and Sasaki Magnetic Field}
	Let $(M,g)$ be a three-dimensional orientable Riemannian manifold and let $B$ be a divergence free vector field.
	For a volume form $\Omega$, the 2-form $F_B:=\iota_B\Omega$ defines a magnetic field on $M$ since $dF_B=divB\Omega=0$.
	This defines a one-on-one correspondence between closed 2-form fields and divergence free vector fields.
	Therefore one can identify magnetic 2-form fields and magnetic vector fields.
	We would also say that $B$ is the magnetic vector field of the magnetic field $F_B$.
	Moreover the Lorentz force of $F_B$ is given by $\phi(X)=X\wedge B$, where $\wedge$ indicates vector product defined by $g(X\wedge Y,Z)=\Omega(X,Y,Z)$ for all vector field $X,Y,Z$.
	Especially, for a Killing vector field $\xi$, $F_\xi$ is called a Killing magnetic field\cite{Cabrerizo et al 2009}.
	
	Let $(M,\eta,\phi,\xi,g)$ be a Sasakian manifold whose contact form, endmorphism field, Reeb vector field and Riemannian metric compatible with the endmorphism field are $\eta,\ \phi,\ \xi$ and $g$ respectively.
	Sasakian magnetic field is defined by the closed 2-form $F:=d\eta$.
	The endmorphism field $\phi$ gives the Lorentz force since $g(X,\phi Y)=d\eta(X,Y)$.
	
	In particular, in the case of $\dim M=3$, $\phi(X)=X\wedge\xi$ holds for all vector field $X$, i.e. the Sasakian magnetic field is a Killing magnetic field described by Reeb vector field $\xi$(See Appendix \ref{appen sasaki manifold}. Proposition \ref{appen prop sasaki and killing magnetic field}).
	And we should note that we can regard the Reeb vector field $\xi$ as a magnetic vector field only in three-dimensional.
	Magnetic curves in a three-dimensional Sasakian manifold are studied\cite{Cabrerizo et al 2009}.
	Especially, magnetic curves in Sasakian space forms are classified\cite{Romaniuc et al 2015}.
	
	A contact metric manifold whose Reeb vector field is a Killing vector field is called K-contact.
	Sasakian manifold is K-contact and three-dimensional K-contact manifold is a Sasakian manifold.
	However a $2n+1(>3)$-dimensional K-contact manifold is not necessarily Sasakian manifold.
	And the following is known as necessary and sufficient condition of being K-contact for an odd dimensional Riemannian manifold.
	\begin{thm}
		[\cite{K.Yano and M.Kon} Chapter V Theorem 3.1]
		For an odd dimensional Riemannian manifold $(M,g)$, the necessary and sufficient condition of M being K-contact is (1) M admits a unit Killing vector field $\xi$, and (2) sectional curvature of arbitrary planes containing $\xi$ are equal to one.
		Moreover, the Reeb vector field of the Sasakian structure is $\xi$.
	\end{thm}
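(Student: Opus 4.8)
The statement is an equivalence, so the plan is to treat the two implications separately, the reconstruction of the K\nobreakdash-contact structure from the curvature hypothesis being the substantial direction. Throughout I use the convention $R(X,Y)Z=\nabla_X\nabla_Y Z-\nabla_Y\nabla_X Z-\nabla_{[X,Y]}Z$, and the only external input is Kostant's identity for a Killing field $\xi$,
\[
\nabla_X\nabla_Y\xi-\nabla_{\nabla_X Y}\xi=R(X,\xi)Y,
\]
which is a formal consequence of the Killing equation. For the forward implication, a K\nobreakdash-contact structure supplies by definition a unit Killing Reeb field $\xi$, so (1) is immediate; moreover it satisfies $\nabla_X\xi=-\phi X$, and substituting $Y=\xi$ above together with $\phi\xi=0$ and $\nabla_\xi\xi=0$ gives $R(X,\xi)\xi=-\phi^2X=X-\eta(X)\xi$, whence any plane containing $\xi$ has sectional curvature $g(R(X,\xi)\xi,X)/(|X|^2-\eta(X)^2)=1$, which is (2).

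For the converse I must manufacture $(\eta,\phi)$ out of $\xi$ and then check the contact metric axioms. I would set $\eta:=g(\cdot,\xi)$ and $\phi X:=-\nabla_X\xi$. Since $\xi$ is Killing, $\nabla\xi$ is $g$\nobreakdash-skew, so $\phi$ is skew and $\eta(\xi)=1$. The first auxiliary fact is $\nabla_\xi\xi=0$: as $|\xi|$ is constant we have $g(\nabla_Y\xi,\xi)=0$ for every $Y$, and the Killing equation with $Y=\xi$ then forces $g(\nabla_\xi\xi,Y)=-g(\nabla_Y\xi,\xi)=0$; hence $\phi\xi=0$ and the $\xi$\nobreakdash-flow is geodesic. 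The second step converts hypothesis (2) into an operator identity: the map $X\mapsto R(X,\xi)\xi$ is $g$\nobreakdash-symmetric by the curvature symmetries, and (2) says its quadratic form equals $|X|^2-\eta(X)^2$, so by polarization $R(X,\xi)\xi=X-\eta(X)\xi$.

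The crux is to read off $\phi^2$. Directly from the definition and $\nabla_\xi\xi=0$ one gets $\phi^2X=\nabla_{\nabla_X\xi}\xi=-\nabla^2_{X,\xi}\xi$, and Kostant's identity at $Y=\xi$ gives $\nabla^2_{X,\xi}\xi=R(X,\xi)\xi$; combined with the previous step this yields $\phi^2X=-R(X,\xi)\xi=-X+\eta(X)\xi$, i.e. $\phi^2=-I+\eta\otimes\xi$. The remaining axioms are then formal: compatibility $g(\phi X,\phi Y)=-g(\phi^2X,Y)=g(X,Y)-\eta(X)\eta(Y)$ follows from skewness and the value of $\phi^2$; the contact condition follows from $d\eta(X,Y)=g(\nabla_X\xi,Y)-g(\nabla_Y\xi,X)=2g(X,\phi Y)$ (up to the normalization of $d$) using skewness; and nondegeneracy holds because $\phi^2=-I+\eta\otimes\xi$ makes $\phi|_{\ker\eta}$ an almost complex structure on the even-dimensional space $\xi^\perp$ (here the odd-dimensionality of $M$ is used), so $\eta\wedge(d\eta)^n\neq0$. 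Thus $(\eta,\phi,\xi,g)$ is a contact metric structure with Killing Reeb field $\xi$, i.e. K\nobreakdash-contact with $\xi$ its Reeb field.

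The main obstacle is exactly the identification $\phi^2=-I+\eta\otimes\xi$: it is the single point at which the geometric hypothesis (2) is consumed, and it rests on correctly marrying Kostant's second-order Killing identity with the curvature symmetries and the normalization $\nabla_\xi\xi=0$. Pinning down the signs and tensor slots in $\nabla^2_{X,\xi}\xi=R(X,\xi)\xi$ is the delicate part; everything upstream (skewness, $\phi\xi=0$, the polarization of (2)) and downstream (compatibility, the contact condition, nondegeneracy) is then routine.
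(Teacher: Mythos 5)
Your proposal is correct and follows essentially the same route as the paper's own proof: define $\eta=g(\cdot,\xi)$, $\phi=-\nabla\xi$, and use the Killing-field identity $R(X,\xi)Y=\nabla_X\nabla_Y\xi-\nabla_{\nabla_XY}\xi$ at $Y=\xi$ to convert the curvature hypothesis into $\phi^2=-I+\eta\otimes\xi$. You supply a few details the paper elides (the polarization step and $\nabla_\xi\xi=0$), and you get metric compatibility purely algebraically from skewness of $\phi$ where the paper invokes O'Neill's formula, but these are minor variations, not a different argument.
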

	\begin{proof}
		See Appendix \ref{appen sasaki manifold} Theorem \ref{appen thm k-contact}.
	\end{proof}
	Moreover, for a Riemannian manifold that admits a unit Killing vector field $\xi$, it is known that sectional curvature of arbitrary planes containing $\xi$ are non-negative\cite{Berestovskii Valerii Nikonrov 2008}(see appendix \ref{appen killing submersion})．
	Therefore we have the following.
	\begin{cor}
		\label{cor sectional curvature and sasaki magnetic field}
		Let $(M,g)$ be a three-dimensional orientable Riemannian manifold that admits a unit Killing vector field $\xi$.
		If sectional curvature of arbitrary planes containing $\xi$ are non-zero constant on M, then M is a Sasakian manifold and $\xi$ is the Reeb vector field of the Sasakian structure.
	\end{cor}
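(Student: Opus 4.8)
The plan is to reduce the statement to the Yano--Kon characterization of K-contact manifolds stated above by a single homothetic rescaling of the metric, with the non-negativity of the sectional curvatures of planes through a unit Killing field supplying the sign information that makes the rescaling legitimate.

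First I would pin down the constant. By the fact recalled above that the sectional curvature of a plane containing $\xi$ depends only on the integral curve of $\xi$, at each point all planes through $\xi$ share one value, and the hypothesis says this value is a single number $K\neq 0$ over all of $M$. The cited result of Berestovskii--Nikonrov, that a manifold admitting a unit Killing field has non-negative sectional curvature on planes containing that field, forces $K\geq 0$, hence $K>0$. This positivity is the essential point: it is what guarantees a real positive rescaling factor, and it is where the $\emph{Killing}$ property of $\xi$ (not merely its being a unit field) is genuinely used.

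Next I would rescale. Set $\hat g := K g$. A constant homothety leaves the Levi--Civita connection and the $(1,3)$ curvature tensor unchanged, so sectional curvatures scale by $1/K$; hence every plane containing $\xi$ has $\hat g$-sectional curvature $K/K = 1$. The field $\xi$ is still Killing for $\hat g$, since the Killing equation is linear and homothety-invariant, but now $\hat g(\xi,\xi)=K$, so I replace it by $\hat\xi := \xi/\sqrt{K}$, a unit Killing field for $\hat g$. Because $\hat\xi$ is a positive multiple of $\xi$, the planes containing $\hat\xi$ are exactly those containing $\xi$, so their $\hat g$-sectional curvatures remain identically $1$. Both hypotheses of the Yano--Kon theorem now hold for $(M,\hat g,\hat\xi)$, so $(M,\hat g)$ is K-contact with Reeb field $\hat\xi$; since $\dim M = 3$, a K-contact manifold is Sasakian. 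Thus $M$ carries a Sasakian structure whose Reeb field is the normalization $\hat\xi = \xi/\sqrt{K}$ of $\xi$, which, read up to this homothety (i.e. in the normalization $K=1$), is $\xi$ itself, as asserted.

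The main obstacle I anticipate is not any single computation but the bookkeeping of the normalization: one must scale the metric and the vector field simultaneously so that the unit-length condition and the curvature-equals-one condition are satisfied together, and then verify that ``planes containing $\xi$'' is unchanged when $\xi$ is replaced by a positive multiple. Both substantive geometric inputs---the Yano--Kon theorem and the Berestovskii--Nikonrov non-negativity---are external, so once the sign of $K$ is fixed the remaining argument is essentially formal.
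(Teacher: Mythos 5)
Your proposal is correct and follows essentially the same route the paper intends: the corollary is stated as an immediate consequence of the Yano--Kon theorem together with the Berestovskii--Nikonorov non-negativity, and the homothetic rescaling $\hat g = Kg$, $\hat\xi=\xi/\sqrt{K}$ is exactly the (implicit) step needed to bring the constant to $1$. Your closing remark about the normalization is in fact slightly more careful than the paper's own statement, which asserts the Sasakian structure for $g$ and Reeb field $\xi$ without flagging that this holds only up to the homothety unless the constant equals one.
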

	
	\subsection{Riemannian Submersion}
	In this subsection we recall some definitions in the Riemannian submersion\cite{O'neill,Maria}.
	A differentiable map $\pi$ from a Riemannian manifold $(M,g)$ to a Riemannian manifold $(B,h)$ is a submersion if rank of $\pi_\ast$ is maximum.
	Vertical distribution is defined by $TM^v:=\ker\pi_\ast$ and horizontal distribution $TM^h$ is a tangent distribution that is orthogonal to the vertical distribution.
	Then a submersion $f$ is called Riemannian submersion if the restriction of $\pi_\ast$ to $TM^h$ is an isomorphism.
	An inverse image of $\pi$ defines a submanifold of M, called a fiber.
	And we assume that the dimension of fibers is larger than or equal to one.
	Let $\Gamma(D)$ be the linear space of all sections of a distribution $D$, then the orthogonal direct decomposition $\Gamma(TM)=\Gamma(TM^h)\oplus\Gamma(TM^v)$ holds with respect to the decomposition $T_pM=T_pM^h\oplus T_pM^v$ for all $p\in M$.
	And we write $\Gamma(TM)^c:=\{X\in\Gamma(TM);\ \pi_\ast X\in\Gamma(TB)\}$ and then an element of $\Gamma(TM)^b=\Gamma(TM^h)\cap\Gamma(TM)^c$ is called a basic vector field.
	For $X\in\Gamma(TM)$ and $X'\in\Gamma(TB)$, $X$ is $\pi-$related to $X'$ if $\pi_\ast X=X'$.
	If $X,Y\in\Gamma(TM)^b$ is $\pi-$related to $X',Y'$ then the relation of $\nabla$ and $\nabla'$ is given by
	\begin{align}
	\pi_\ast h\nabla_XY=\nabla'_{X'}Y',
	\end{align}
	i.e. $h\nabla_XY\in\Gamma(TM)^b$ is $\pi-$related to $\nabla'_{X'}Y'$.
	Here $\nabla$ and $\nabla'$ denotes Riemannian connections of $(M,g)$ and $(B,h)$ respectively.
	
	There exists two important tensors characterizing Riemannian submersion.
	Let $h$ and $v$ be projection from $\Gamma(TM)$ to $\Gamma(TM^h)$ and $\Gamma(TM^v)$, respectively.
	Fundamental tensors $A,T$ of Riemannian submersion are given by
	\begin{align}
	A_EF=h\nabla_{hE}vF+v\nabla_{hE}hF,\\
	T_EF=h\nabla_{vE}vF+v\nabla_{vE}hF,\\
	{\rm for}\ E,F\in\Gamma(TM).\nonumber
	\end{align}
	The tensor $A$ satisfies $A_XY=-A_YX=\frac{1}{2}v[X,Y]$ for $X,Y\in\Gamma(TM^h)$, and measures Frobenius integrability of $TM^h$, i.e. if $A=0$ then the horizontal distribution is integrable.
	And the tensor $T$ is, if one restricts it on $\Gamma(TM^v)$, a second fundamental form of fibers.
	The Fibers are totally geodesic if and only if $T=0$, and then we call such a Riemannian submersion a totally geodesic Riemannian submersion.
	
	A totally geodesic Riemannian submersion defines a fiber bundle\cite{Hermann}.
	For a totally geodesic Riemannian submersion, let $K$ be the sectional curvature of $M$, then the following
	\begin{align}
	K(X,V)=\frac{||A_XV||^2}{||X\wedge V||^2}\label{eq sectional curvature and A}\\
	{\rm for}\ X\in\Gamma(TM^h),\ V\in\Gamma(TM^v),
	\end{align}
	is known.
	This is a part of O'neill's formula.
	
	For a curve $c:I\rightarrow M$, we can decompose a tangent vector $X:=\dot{c}$ into $X(t)=E(t)+V(t),\ E\in\Gamma(TM^h),\ V\in\Gamma(TM^v)$.
	Since we have generally $h\nabla_XX=h\nabla_EE+2A_EV+T_VV$ then, for a totally geodesic Riemannian submersion,
	\begin{align}
	h\nabla_XY=h\nabla_EE+2A_EV,
	\label{eq horizontal decomposition of covariant derivative}
	\end{align}
	holds.

	\subsection{Killing Submersion}
	Let $(M,g)$ be a Riemannian manifold admits a unit Killing vector field $\xi$.
	Then sectional curvature of arbitrary planes containing $\xi$ are non-negative\cite{Berestovskii Valerii Nikonrov 2008}．
	In particular, if $M$ is an orientable and $\dim M=3$ then sectional curvature of an arbitrary plane containing $\xi$ depends on only an integral curve of $\xi$\cite{Espinar Oliveira 2013,Manzano 2014}．
	In detail, for arbitrary vector field $X$ we define a function $\tau$ by
	\begin{align}
	\nabla_X\xi=\tau X\wedge \xi.
	\end{align}
	We should note that the function $\tau$ is independent of the choice of X.
	Let $\pi:M\rightarrow B$ be a Riemannian submersion whose fibers are integral curves of $\xi$.
	This Riemannian submersion $\pi$ is called Killing submersion.
	Since the function $\tau$ is constant on each fibers, the scalar function $\tau$ defines the scalar function on $B$, called the bundle curvature of $\pi$.
	Furthermore, $\tau=K(X,\xi)$ holds, where $K$ denotes sectional curvature of $M$.
	We summarized proofs of these facts in Appendix \ref{appen killing submersion}. 
	
	If $\pi:M\rightarrow B$ is a Killing submersion then length of all fibers are same, finite or infinite(\cite{Manzano 2014} Lemma 2.3).
	If the basespace is simply connected then two Killing submersions are bundle isomorphic if length of fibers, bundle curvature and sectional curvature of the basespace are coincide up to isomorphism.
	Therefore, if the basespace is simply connected, a Killing submersion is characterized by $\tau$ and $\kappa$.
	We denote the total space by $M(\tau,\kappa)$.
	
	\section{Characterization of Sasakian Structure from a View Point of Magnetic Hopf Surfaces}\label{section Characterization of Sasaki structure from view point of magnetic Hopf surfaces}
	Let $\pi:M(\tau,\kappa)\rightarrow B$ be a Killing submersion of a unit Killing vector field $\xi$, where $\tau$ and $\kappa$ are the bundle curvature and sectional curvature of $B$ respectively.
	Let $c(t):I=(a,b)\rightarrow M$ be a normal magnetic curve for the Killing magnetic field $F_\xi$, where we take $b-a$ small enough.
	On the magnetic curve $c$, the angle $\theta_0$ between $\xi$ and $\dot{c}$ is constant since $\nabla_{\dot{c}}g(\dot{c},\xi)=g(\dot{c}\wedge\xi,\dot{c})+g(\dot{c},\nabla_{\dot{c}}\xi)=0$.
	If $\theta_0\ne0$, we call $\pi^{-1}\pi(c(t))$ the magnetic Hopf surface generated by $c$.
	Our main theorem is the following.
	\begin{thm}
		Let $\pi:M(\tau,\kappa)\rightarrow B$ be a Killing submersion of a unit Killing vector field $\xi$ and we assume that $M$ is an orientable simply connected complete three-dimensional Riemannian manifold with $\tau>0$.
		If an arbitrary short enough normal magnetic curve $c$ for the Killing magnetic field $F_\xi$ generates a magnetic Hopf surface of constant mean curvature, then M is a Sasakian manifold and $\xi$ is the Reeb vector field.
	\end{thm}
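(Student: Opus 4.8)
The plan is to reduce the statement to the constancy of the bundle curvature $\tau$ and then invoke Corollary~\ref{cor sectional curvature and sasaki magnetic field}. Since $\tau=K(X,\xi)$ is the sectional curvature of the planes containing $\xi$ and $\tau>0$ by hypothesis, it suffices to prove that $\tau$ is a (necessarily nonzero) constant; Corollary~\ref{cor sectional curvature and sasaki magnetic field} then gives that $M$ is Sasakian with Reeb field $\xi$.

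First I would fix a magnetic curve $c$ and set up an adapted orthonormal frame along it. Writing $\theta_0$ for the constant angle between $\dot c$ and $\xi$, I decompose $\dot c=\sin\theta_0\,e_1+\cos\theta_0\,\xi$ with $e_1$ a unit horizontal (basic) field, and let $e_2$ complete $(e_1,e_2,\xi)$ to a positively oriented orthonormal frame, so that the magnetic Hopf surface $S=\pi^{-1}(\pi(c))$ has tangent planes spanned by $e_1,\xi$ and unit normal $e_2$. Because $\xi$ is a unit Killing field one has $\nabla_\xi\xi=\tau\,\xi\wedge\xi=0$, so the fibers are geodesics and the second fundamental form of $S$ satisfies $g(\nabla_\xi\xi,e_2)=0$. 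Hence the mean curvature of $S$ reduces to $H=\tfrac12\,g(\nabla_{e_1}e_1,e_2)$, that is, to one half of the geodesic curvature of the projected curve $\bar c:=\pi\circ c$ in $B$.

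The computational core is to express this geodesic curvature through $\tau$. Using the magnetic curve equation $\nabla_{\dot c}\dot c=\dot c\wedge\xi=-\sin\theta_0\,e_2$, the defining relation $\nabla_X\xi=\tau\,X\wedge\xi$, and the invariance $[\xi,e_1]=0$ of the basic field $e_1$ under the fiber flow (valid because $\xi$ is Killing), I would compute $\nabla_{\dot c}e_1=(\tau\cos\theta_0-1)e_2$ and $\nabla_\xi e_1=-\tau e_2$, and separate the $e_1$- and $\xi$-directional parts of $\nabla_{\dot c}e_1=\sin\theta_0\nabla_{e_1}e_1+\cos\theta_0\nabla_\xi e_1$ to extract
\begin{align}
g(\nabla_{e_1}e_1,e_2)=\frac{2\tau\cos\theta_0-1}{\sin\theta_0}, \qquad H=\frac{2\tau\cos\theta_0-1}{2\sin\theta_0}.
\end{align}
Here $\tau$ is evaluated at the footpoint of the fiber and is constant along fibers, so $H$ is automatically constant in the vertical direction; thus $S$ has constant mean curvature if and only if $\tau$ is constant along $\bar c$, provided $\cos\theta_0\neq0$.

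Finally I would run the hypothesis over a sufficiently rich family of magnetic curves. Fix $\bar p\in B$ and a unit $w\in T_{\bar p}B$; for any $\theta_0\in(0,\tfrac\pi2)$ the normal magnetic curve through a point of the fiber over $\bar p$ with $\dot c(0)=\sin\theta_0\,e_1+\cos\theta_0\,\xi$ and $\pi_\ast e_1=w$ projects to a curve $\bar c$ with $\dot{\bar c}(0)=\sin\theta_0\,w$. The constant-mean-curvature assumption forces $\tau$ to be constant along $\bar c$, whence $d\tau_{\bar p}(w)=0$; letting $w$ range over all directions gives $d\tau_{\bar p}=0$, and since $B$ is connected, $\tau$ is constant. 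I expect the main obstacle to be precisely the frame computation of the geodesic curvature — in particular justifying $[\xi,e_1]=0$ and correctly isolating the $e_1$- and $\xi$-directional derivatives inside $\nabla_{\dot c}e_1$ — since this is where $\tau$ genuinely enters and where the nondegenerate case $\cos\theta_0\neq0$ is distinguished from the orthogonal case $\theta_0=\tfrac\pi2$, in which $H$ is constant irrespective of $\tau$ and yields no information.
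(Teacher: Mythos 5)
Your proposal is correct, and it arrives at the same pivotal identity as the paper: for magnetic curves making a fixed angle $\theta_0$ with $\xi$, the mean curvature of the Hopf surface is half the geodesic curvature of the projected curve, which is an affine function of $\tau\cos\theta_0$, so for $\cos\theta_0\neq 0$ constancy of $H$ is equivalent to constancy of $\tau$ along the projection; both arguments then finish with Corollary~\ref{cor sectional curvature and sasaki magnetic field}. The routes differ in two places. For the computation, the paper works with the O'Neill tensor: it uses $h\nabla_XX=h\nabla_EE+2A_EV$ from \eqref{eq horizontal decomposition of covariant derivative}, gets $\|A_EV\|$ from \eqref{eq sectional curvature and A}, and fixes the sign of $A_EV$ by a separate orientation argument; your moving-frame computation of $\nabla_{\dot c}e_1$ together with $\nabla_\xi e_1=\nabla_{e_1}\xi=-\tau e_2$ gives the same horizontal component with no sign case to settle (the needed fact $[\xi,e_1]=0$ is best justified by noting that $[\xi,e_1]$ is vertical because $e_1$ is basic and orthogonal to $\xi$ because the fibers are geodesics). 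Your $H=(2\tau\cos\theta_0-1)/(2\sin\theta_0)$ versus the paper's $H=\sin\theta_0(1-2\tau\cos\theta_0)/2$ reflects only a choice of normal orientation and the unit-speed normalization of $k_g$ (yours is the standard one); since $\theta_0$ is fixed, this is immaterial. The genuinely different step is the globalization: the paper notes that each projected curve is a circle or geodesic of $B$ and invokes Lemma~\ref{lem local completeness of circle on 2dim} (hence completeness) to join any two points of $B$ by piecewise projected magnetic curves, whereas you observe that through every $\bar p\in B$ and every direction $w$ there is a short projected magnetic curve with initial velocity $\sin\theta_0\,w$ along which $\tau$ is constant, so $d\tau_{\bar p}=0$ everywhere and $\tau$ is constant on the connected base. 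Your version is more elementary, does not use completeness or Lemma~\ref{lem local completeness of circle on 2dim} at this stage, and sidesteps the delicate question of whether the connecting circles of that lemma are themselves realizable as projections of magnetic curves; it only requires that $\tau$ be differentiable, which it is.
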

	\begin{proof}
		Let $c(t)$ be an enough short normal magnetic curve and put $\alpha(t)=\pi(c(t))$.
		We decompose $X:=\dot{c}(t)$ into horizontal and vertical direction, and extend suitably so that 
		\begin{align}
		X(t)=E(c(t))+V(c(t)),\\
		E\in\Gamma(TM)^b,V\in\Gamma(TM^v).
		\end{align}
		We assume $g(X,\xi)=\cos\theta_0,\ 0<\theta_0\le\pi/2$.
		From equation \eqref{eq horizontal decomposition of covariant derivative} we write the horizontal component of $\nabla_XX=X\wedge\xi$ for a magnetic curve as
		\begin{align}
		h\nabla_EE+2A_EV=X\wedge\xi.
		\end{align}
		
		We define the normal vector field $N\in\Gamma(TM)^b$ on $\Sigma:=\pi^{-1}\pi(c)$ such that $X\wedge\xi=\sin\theta_0N$.
		Since $g(A_EV,E)=-g(V,A_EE)=0$, $A_EV$ is proportional to $N$.
		Moreover, from equation\eqref{eq sectional curvature and A}, we have $||A_EV||^2=K(E,V)||E||^2||V||^2=\tau^2\sin^2\theta_0\cos^2\theta_0$ and then 
		\begin{align}
		A_EV=\epsilon\tau\sin\theta_0\cos\theta_0N,\ \epsilon=\pm1
		\end{align}
		holds.
		We have $g(A_EV,N)=g(\nabla_EV,N)=\tau||V|| g(E\wedge\xi,N)=\tau||V||\Omega(N,E,V)>0$, thus $\epsilon=1$ holds.
		Since $X\wedge\xi=\sin\theta_0N$ we obtain
		\begin{align}
		h\nabla_EE=\sin\theta_0(1-2\tau\cos\theta_0)N.
		\end{align}
		
		According to \cite{Espinar Oliveira 2013}, for a curve $\alpha$ whose geodesic curvature is $k_g$ on $B$, the mean curvature of the surface $\pi^{-1}(\alpha)$ of $M$ is given by $H=k_g/2$.
		Putting $T:=\dot{\alpha}$, the mean curvature of $\Sigma$ is $H=k_g/2=\sin\theta_0(1-2\tau\cos\theta_0)/2$ from $\pi_\ast h\nabla_EE=\nabla'_{T}T$, where $\nabla'$ is the Riemannian connection of $B$.
		From the assumption that the mean curvature of $\Sigma$ is constant, then $\tau$ is constant along $\alpha$.
		Therefore $\alpha(t)$ is a circle or a geodesic on $B$.
		
		We fix $\theta_0$ arbitrary, and we consider only normal magnetic curves $c(t)$ such that $g(\xi,\dot{c})=\cos\theta_0$.
		Since $\pi(c(t))$ is a magnetic curve on $B$ (or a geodesic on $B$), Lemma \ref{lem local completeness of circle on 2dim} (or geodesic completeness of $B$) leads to that arbitrary two points of $B$ is connected by piecewise projected magnetic curves.
		Then $\tau$ is constant on $B$.
		Therefore all bundle curvatures coincide and we obtain our claim by lemma \ref{cor sectional curvature and sasaki magnetic field}.
	\end{proof}

	\section{Conclusion}\label{section conclusion and disccution}
	We have inspected magnetic fields by motions of charged particles.
	We approach to this problem in simple mathematical settings and then we give a characterization of Sasakian manifold by using the geometric property of the surfaces generated by magnetic field lines and magnetic curves.
	For a three-dimensional Riemannian manifold $(M,g)$ that admits a unit Killing vector field $\xi$, we consider the Killing submersion $\pi:M\rightarrow B$ whose fibers are integral curves of $\xi$ with positive bundle curvature and we regard $\xi$ as magnetic vector field.
	We show that if an arbitrary normal magnetic curve $c$ for the Killing magnetic vector field $\xi$ generates a magnetic surface of constant mean curvature, then $M$ is a Sasakian manifold and $\xi$ is the Reeb vector field.

	\appendix
	\section{Killing Submersion}\label{appen killing submersion}
	In this appendix, we recall some facts in Killing submersion\cite{Berestovskii 2008,Espinar Oliveira 2013}．
	Through this section, $K(X,Y)$ denotes the sectional curvature of the plane spanned by $X$ and $Y$.
	\begin{prop}
		Let $(M,g)$ be a Riemannian manifold that admits a unit Killing vector field $\xi$.
		Then the sectional curvature of arbitrary planes containing $\xi$ are non-negative and the integral curve of $\xi$ is a geodesic.
	\end{prop}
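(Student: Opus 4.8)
The plan is to extract everything from the Killing equation $g(\nabla_X\xi,Y)+g(\nabla_Y\xi,X)=0$, which says that the $(1,1)$-tensor $X\mapsto\nabla_X\xi$ is skew-symmetric with respect to $g$. The first thing I would record is the consequence of $\xi$ having unit length: since $g(\xi,\xi)\equiv1$, for every vector field $X$ one has $g(\nabla_X\xi,\xi)=\frac{1}{2}Xg(\xi,\xi)=0$. This single identity drives both claims.

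For the geodesic statement I would specialize the Killing equation to $Y=\xi$: for an arbitrary vector field $X$, $g(\nabla_\xi\xi,X)=-g(\nabla_X\xi,\xi)=0$ by the identity just noted. Since $X$ is arbitrary, $\nabla_\xi\xi=0$, so every integral curve of $\xi$ is a geodesic.

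For the curvature statement, fix a plane spanned by $\xi$ and a vector field $X$, so that $K(\xi,X)=g(R(\xi,X)X,\xi)/||\xi\wedge X||^2$, where $R$ is the Riemann curvature tensor. The key input is the standard second-order identity for a Killing field, $\nabla_X\nabla_X\xi-\nabla_{\nabla_XX}\xi=-R(\xi,X)X$; equivalently, the restriction of $\xi$ to any geodesic is a Jacobi field, which fixes the sign unambiguously. I would then pair this with $\xi$. The term $g(\nabla_{\nabla_XX}\xi,\xi)$ vanishes by the unit-length identity, while differentiating $g(\nabla_X\xi,\xi)=0$ in the direction $X$ yields $g(\nabla_X\nabla_X\xi,\xi)=-||\nabla_X\xi||^2$. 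Combining these gives $g(R(\xi,X)X,\xi)=||\nabla_X\xi||^2\ge0$, hence $K(\xi,X)\ge0$, with equality precisely when $\nabla_X\xi=0$.

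The computation itself is short; the only place demanding care is the bookkeeping of sign conventions in the curvature tensor and in the second-order Killing identity. I expect this to be the main (and essentially only) obstacle, and the safe route is to read off the sign from the Jacobi equation $\nabla_X\nabla_X\xi=-R(\xi,X)X$, after which the non-negativity is manifest from $||\nabla_X\xi||^2\ge0$.
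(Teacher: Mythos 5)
Your proposal is correct and follows essentially the same route as the paper: both derive the geodesic claim from the Killing equation with $Y=\xi$ together with $g(\nabla_X\xi,\xi)=0$, and both obtain $g(R(\xi,X)X,\xi)=||\nabla_X\xi||^2\ge 0$ by pairing the second-order Killing (Jacobi-field) identity with $\xi$ and differentiating $g(\nabla_X\xi,\xi)=0$. The only cosmetic difference is that the paper works along a geodesic so the $\nabla_{\nabla_XX}\xi$ term is absent from the start, whereas you keep it and observe that it dies when paired with $\xi$.
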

	\begin{proof}
		For an arbitrary point $p\in M$ and an arbitrary unit vector $X\in T_pM$ orthogonal to $\xi$, let $x(s)$ be the geodesic starting from the initial point $p$ and the initial direction $X$, and the same symbol $X$ denotes $\dot{x}$.
		Since a Killing vector field is a Jacobi field for an arbitrary geodesic, $0=\frac{1}{2}\nabla_X\nabla_Xg(\xi,\xi)=g(\nabla_X\xi,\nabla_X\xi)+g(\nabla_X\nabla_X\xi,\xi)=g(\nabla_X\xi,\nabla_X\xi)-g(R(\xi,X)X,\xi)$ holds and then we have the first claim $K(X,\xi)=g(R(\xi,X)X,\xi)=||\nabla_X\xi||^2\geq0$.
		And the second claim follows from $0=(L_\xi g)(\xi,X)=\xi g(\xi,X)+g(\xi,[\xi,X])=g(\nabla_\xi\xi,X)-g(\xi,\nabla_X\xi)=g(\nabla_\xi\xi,X)-\frac{1}{2}Xg(\xi,\xi)=g(\nabla_\xi\xi,X)$ for an arbitrary $X$.
	\end{proof}
	
	$\nabla_X\xi$ is proportion to $X\wedge \xi$ since we have $g(\nabla_X\xi,X)=g(\nabla_X\xi,\xi)=0$ for an arbitrary vector field $X$.
	Then we have the following.
	\begin{prop}
		Let $(M,g)$ be a three-dimensional orientable Riemannian manifold that admits a unit Killing vector field $\xi$.
		For an arbitrary vector field $X$ we define the scalar function $\tau_X$ by
		\begin{align}
		\nabla_X\xi=\tau_X X\wedge \xi,
		\end{align}
		where $\nabla$ is the Riemannian connection with respect to $g$.
		Then $\tau=\tau_X$ is independent of $X$.
	\end{prop}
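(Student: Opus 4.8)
The plan is to recognize that the endomorphism $S\colon X\mapsto\nabla_X\xi$ is a skew-symmetric $(1,1)$-tensor annihilating $\xi$, so that on the oriented three-manifold it is forced to be a single scalar multiple of the fixed skew-symmetric endomorphism $X\mapsto X\wedge\xi$. Constancy of $\tau_X$ in $X$ then follows from the one-dimensionality of the skew-symmetric endomorphisms of the plane $\xi^{\perp}$.

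First I would record the algebraic properties of $S$. Since $\nabla$ is $C^\infty(M)$-linear in its lower slot, $S$ is a genuine $(1,1)$-tensor. The Killing equation $g(\nabla_X\xi,Y)+g(\nabla_Y\xi,X)=0$ shows that $S$ is skew-symmetric, and in particular $g(SX,X)=0$; differentiating $g(\xi,\xi)=1$ gives $g(SX,\xi)=0$ for every $X$. Combining these with skew-symmetry yields $g(S\xi,Y)=-g(SY,\xi)=0$ for all $Y$, i.e.\ $S\xi=0$ (equivalently, the integral curves of $\xi$ are geodesics, as in the preceding Proposition). These are exactly the facts recalled just above the statement, namely that $\nabla_X\xi$ is orthogonal to both $X$ and $\xi$.

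Next I would compute in a local oriented orthonormal frame $\{e_1,e_2,\xi\}$ with $\Omega(e_1,e_2,\xi)=1$. Because $Se_1\perp e_1$ and $Se_1\perp\xi$, we have $Se_1=a\,e_2$ for some function $a$; skew-symmetry then forces $Se_2=-a\,e_1$, while $S\xi=0$. The vector product satisfies $e_1\wedge\xi=-e_2$ and $e_2\wedge\xi=e_1$, so $Se_1=-a\,(e_1\wedge\xi)$ and $Se_2=-a\,(e_2\wedge\xi)$, giving $\tau_{e_1}=\tau_{e_2}=-a$. Writing an arbitrary $X=\alpha e_1+\beta e_2+\gamma\xi$ and using linearity of $S$ and bilinearity of $\wedge$, I would compute $SX=-a(\beta e_1-\alpha e_2)$ and $X\wedge\xi=\beta e_1-\alpha e_2$, whence $SX=-a\,(X\wedge\xi)$. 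Thus $\tau_X=-a$ for every $X$ not proportional to $\xi$, which is the claim. Equivalently and more invariantly, the $2$-form $\omega(X,Y):=g(\nabla_X\xi,Y)$ satisfies $\iota_\xi\omega=0$, and on an oriented $3$-manifold the two-forms with this property constitute the rank-one module spanned by $\iota_\xi\Omega$; hence $\omega=-\tau\,\iota_\xi\Omega$ for a single function $\tau$, and unpacking $\iota_\xi\Omega(X,Y)=\Omega(\xi,X,Y)=-g(X\wedge\xi,Y)$ reproduces $\nabla_X\xi=\tau\,X\wedge\xi$ with $\tau$ independent of $X$.

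I do not expect a serious obstacle: the statement is a pointwise fact of linear algebra. The only points to handle carefully are the sign conventions in $X\wedge\xi$ and the observation that $S$ restricts to an endomorphism of the plane $\xi^{\perp}$, whose space of skew-symmetric endomorphisms is one-dimensional. The substantive geometric inputs, skew-symmetry from the Killing property and $g(\nabla_X\xi,\xi)=0$ from $\lVert\xi\rVert=1$, are already in hand.
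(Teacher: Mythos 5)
Your proof is correct and rests on the same mechanism as the paper's: skew-symmetry of $X\mapsto\nabla_X\xi$ coming from the Killing equation, evaluated in an oriented orthonormal frame $\{e_1,e_2,\xi\}$, which is exactly the paper's one-line computation $0=g(\nabla_{X}\xi,Y)+g(\nabla_{Y}\xi,X)=-\tau_{X}+\tau_{Y}$. Your additional step checking by linearity that $SX=-a\,(X\wedge\xi)$ for an \emph{arbitrary} $X=\alpha e_1+\beta e_2+\gamma\xi$ makes explicit something the paper leaves implicit in the claim that it suffices to compare the two frame vectors, but it is a completion of the same argument rather than a different route.
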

	\begin{proof}
		Let $\Omega$ be a volume form and let $\{X,Y,\xi\}$ be an orthogonal frame such that $\Omega(X,Y,\xi)=1$.
		It is enough to show $\tau_X=\tau_Y$.
		Since $\xi$ is a Killing vector field, then $0=g(\nabla_X\xi,Y)+g(\nabla_Y\xi,X)=\tau_X\Omega(X,\xi,Y)+\tau_Y\Omega(Y,\xi,X)=-\tau_X+\tau_Y$ holds.
	\end{proof}
	
	\begin{prop}
		In the above settings, $\tau_X^2=K(X,\xi)$ holds.
	\end{prop}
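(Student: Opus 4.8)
The plan is to reduce the claim to the identity already established in the first proposition of this appendix, where we showed $K(X,\xi)=||\nabla_X\xi||^2$ for a unit vector $X$ orthogonal to $\xi$. First I would observe that both sides of the asserted equation depend only on the $2$-plane spanned by $X$ and $\xi$: the right-hand side $K(X,\xi)$ is by definition the sectional curvature of that plane, while the left-hand side equals $\tau^2$ by the preceding proposition, since $\tau_X=\tau$ is independent of $X$. Hence it suffices to verify the formula for one convenient choice of $X$ in each such plane, and I would take $X$ to be a unit vector orthogonal to $\xi$.

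For such an $X$, recall from the first proposition that $K(X,\xi)=||\nabla_X\xi||^2$, a pointwise identity obtained from the Jacobi-field property of the Killing field along the geodesic issuing in the direction $X$. Substituting the defining relation $\nabla_X\xi=\tau\,X\wedge\xi$ gives $||\nabla_X\xi||^2=\tau^2||X\wedge\xi||^2$. Since $X$ is a unit vector orthogonal to $\xi$ and $\xi$ is itself a unit vector, one computes $||X\wedge\xi||^2=||X||^2||\xi||^2-g(X,\xi)^2=1$, so that $K(X,\xi)=\tau^2=\tau_X^2$, which is the claim.

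The computation is entirely routine and I do not expect any genuine obstacle; the only point requiring a word of care is the reduction step, namely that restricting to a unit vector orthogonal to $\xi$ entails no loss of generality. This is justified precisely because $\tau_X$ was shown in the previous proposition to be independent of $X$, so that the left-hand side is constant on each plane containing $\xi$, exactly matching the plane-dependence of the sectional curvature on the right-hand side.
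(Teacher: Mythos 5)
Your proof is correct, but it reaches the identity by a different route than the paper. The paper first notes that the fibers are geodesics, so the Killing submersion is totally geodesic, invokes O'Neill's formula $K(X,\xi)=\|A_X\xi\|^2$ (equation \eqref{eq sectional curvature and A} with $\|X\wedge\xi\|=1$), and then computes the components of $A_X\xi$ in an orthonormal frame $\{X,Y,\xi\}$ to get $A_X\xi=-\tau_X Y$, hence $K(X,\xi)=\tau_X^2$. You instead reuse the first proposition of the appendix, namely the pointwise identity $K(X,\xi)=\|\nabla_X\xi\|^2$ coming from the Jacobi-field property of a Killing field along a geodesic, and then substitute $\nabla_X\xi=\tau\,X\wedge\xi$ together with $\|X\wedge\xi\|^2=\|X\|^2\|\xi\|^2-g(X,\xi)^2=1$. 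The two computations are secretly the same object viewed twice: for horizontal $X$ one has $A_X\xi=h\nabla_X\xi=\nabla_X\xi$ (since $\nabla_X\xi$ is orthogonal to $\xi$), so $\|A_X\xi\|^2=\|\nabla_X\xi\|^2$; what differs is the justification that this norm equals the sectional curvature — O'Neill's submersion formula in the paper versus the Killing--Jacobi computation in your argument. Your route is more self-contained (it needs no submersion machinery and recycles a result already proved two propositions earlier), while the paper's route fits the submersion framework it uses throughout Section 3. Your reduction to a unit $X$ orthogonal to $\xi$, justified by the $X$-independence of $\tau_X$ and the plane-dependence of $K$, is sound; the only degenerate case, $X$ parallel to $\xi$, is excluded in both arguments since the sectional curvature is then undefined.
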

	\begin{proof}
		Since integral curves of $\xi$ are geodesics, the Riemannian submersion with respect to $\xi$ is a totally geodesic Riemannian submersion, then we have $K(X,\xi)=||A_X\xi||^2$ from O'Neill's formula\eqref{eq sectional curvature and A}.
		Since we have $g(A_X\xi,X)=g(A_X\xi,\xi)=0$ and $g(A_X\xi,Y)=g(\nabla_X\xi,Y)=\tau_X\Omega(X,\xi,Y)=-\tau_X$ then $A_X\xi=-\tau_X Y$ holds.
		Therefore $K(X,\xi)=\tau_X^2$ holds.
	\end{proof}
	From above two propositions we obtain the following.
	\begin{prop}
		Let $(M,g)$ be a three dimensional orientable Riemannian manifold that admits a unit Killing vector field $\xi$, and let $\pi:M\rightarrow B$ be a Riemannian submersion whose fibers are integral curves of $\xi$.
		Then $\tau$ defined above is a function on B. 
	\end{prop}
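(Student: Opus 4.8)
The plan is to reduce the statement to a constancy property. Since the fibers of $\pi$ are precisely the (connected) integral curves of $\xi$, asserting that $\tau$ is a function on $B$ is equivalent to asserting that $\tau$ is constant along every such integral curve, i.e. that $\xi\tau=0$. So the whole task is to prove $\xi\tau=0$, after which $\tau$ descends as $\tau=\tau_B\circ\pi$ for a well defined function $\tau_B$ on $B$.

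The route I would take is a flow-invariance argument. Let $\psi_t$ be the local flow of $\xi$. Because $\xi$ is Killing, each $\psi_t$ is an isometry of $(M,g)$; because $\nabla\xi$ is skew-symmetric, $\mathrm{div}\,\xi=\mathrm{tr}\,\nabla\xi=0$, so $L_\xi\Omega=(\mathrm{div}\,\xi)\Omega=0$ and $\psi_t$ preserves the volume form $\Omega$; and trivially $(\psi_t)_*\xi=\xi$. Hence $\psi_t$ preserves everything built canonically from $g$, $\Omega$ and $\xi$, in particular the vector product $\wedge$ and the Levi-Civita connection $\nabla$. I would then write $\tau$ in manifestly $\psi_t$-invariant form: for $X$ not parallel to $\xi$, pairing $\nabla_X\xi=\tau\,X\wedge\xi$ with $X\wedge\xi$ gives
\begin{align}
\tau=\frac{g(\nabla_X\xi,\,X\wedge\xi)}{\|X\wedge\xi\|^2}.
\end{align}
Fixing $p$, setting $q=\psi_t(p)$ and $Y=(\psi_t)_*X_p$, the invariances give $(\psi_t)_*(\nabla_X\xi)=\nabla_Y\xi$, $(\psi_t)_*(X\wedge\xi)=Y\wedge\xi$, together with preservation of $g$; substituting into the displayed formula yields $\tau(q)=\tau(p)$. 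Thus $\tau\circ\psi_t=\tau$ for all $t$, which is exactly $\xi\tau=0$.

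The point that needs the most care is justifying the invariance claims rather than the final substitution: one must check that $\psi_t$ genuinely preserves $\wedge$ (which requires \emph{both} $g$-invariance and $\Omega$-invariance, so the divergence-free remark is essential and not cosmetic) and that it commutes with $\nabla$ (immediate once one recalls that the Levi-Civita connection is an isometry invariant). A purely infinitesimal alternative is to differentiate $\nabla_X\xi=\tau\,X\wedge\xi$ along $\xi$, using $\nabla_\xi\xi=0$ and the standard second-order Killing identity expressing $\nabla^2\xi$ in terms of curvature; contracting against an adapted orthonormal frame $\{e_1,e_2,\xi\}$ with $\Omega(e_1,e_2,\xi)=1$ reproduces $\xi\tau=0$, but it is computationally heavier than the flow argument above.
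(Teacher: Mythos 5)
Your proof is correct, but it takes a different route from the paper. The paper gives no self-contained argument here: it says the proposition follows ``from the above two propositions,'' i.e.\ from the $X$-independence of $\tau$ and from $\tau_X^2=K(X,\xi)$, so the implicit reasoning is that $\tau^2$ equals a sectional curvature of a plane containing $\xi$, which is preserved along fibers (ultimately because the flow of $\xi$ acts by isometries fixing $\xi$, a fact the paper leaves to the cited references). Note that this route only yields constancy of $\tau^2$ along a fiber and still needs a continuity-and-connectedness argument to remove the sign ambiguity in passing from $|\tau|$ to $\tau$. Your flow-invariance argument applied directly to the defining identity $\nabla_X\xi=\tau\,X\wedge\xi$ avoids both the detour through O'Neill's formula and the sign issue: since $\psi_t$ preserves $g$, $\Omega$, $\xi$, hence $\wedge$ and $\nabla$, you get $\tau\circ\psi_t=\tau$ outright. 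Two small remarks: the $\Omega$-invariance you derive from $\operatorname{div}\xi=0$ also follows immediately from $\psi_t$ being an orientation-preserving isometry (it lies in the identity component of the isometry group), so that step is a matter of taste rather than being essential in the way you suggest; and you are implicitly relying on the paper's preceding proposition that $\tau$ is independent of $X$ in order for your displayed quotient to define a single function --- worth saying explicitly, since without it ``$\tau$ descends to $B$'' is not even well posed.
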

	
	\section{Sasakian Manifold}\label{appen sasaki manifold}
	In this section we summarize some definitions and facts about Sasakian manifold\cite{K.Yano and M.Kon}．
	Sasakian manifold is often considered as an analogy of K\"{a}hlerian manifold in an odd dimension.
	
	First we introduce almost contact and almost contact metric manifold.
	These are an analogy of almost complex and almost Hermite manifold respectively.
	On a $2n+1$ dimensional Riemannian manifold $M$, if there exists $(1,1)-$type tensor field $\phi$, 1-form $\eta$ and vector field $\xi$ such that
	\begin{align}
	\phi^2=-I+\eta\otimes\xi,\\
	\eta(\xi)=1,
	\end{align}
	then this triple $(\phi,\eta,\xi)$ is called an almost contact structure and $M$ is called an almost contact manifold.
	And then we also have $\phi(\xi)=0,\ \eta\circ\phi=0,\ {\rm rank}\phi=2n$.
	The vector field $\xi$ is called Reeb vector field of the almost contact structure $(\phi,\eta,\xi)$.
	Moreover, if there exists a Riemannian metric $g$ of $M$ satisfies
	\begin{align}
	g(\phi X,\phi Y)=g(X,Y)-\eta(X)\eta(Y),\label{eq metric condition}
	\end{align}
	for all vector field $X,Y$,	the 4-tuple $(\phi,\eta,\xi,g)$ is called an almost contact metric structure, and $M$ is called an almost contact metric manifold.
	In equation \eqref{eq metric condition}, setting $Y=\xi$, we obtain $\eta(X)=g(X,\xi)$.
	
	For an almost contact metric manifold $(M;\phi,\eta,\xi,g)$, we define the 2-form $F$ by $F(X,Y):=g(X,\phi Y)$.
	This is called a fundamental 2-form.
	If $F=d\eta$ holds, $(M;\phi,\eta,\xi,g)$ is called a contact metric manifold.
	This is an analogy of an almost K\"{a}hlerian manifold.
	On a contact metric manifold $(M;\phi,\eta,\xi,g)$, if Reeb vector field $\xi$ is a Killing vector field with respect to $g$ then $M$ is called K-contact.
	
	For an almost contact manifold $(M;\phi,\eta,\xi)$, we can consider a direct product manifold $M\times\mathbb{R}$.
	We can define an almost complex structure $J$ on  $M\times\mathbb{R}$ by
	\begin{align}
	J(X+f\partial_t)=\phi X-f\xi+\eta(X)\partial_t.
	\end{align}
	Here $X,\ f$ and $\partial_t$ are a vector field on $M$, a scalar function on $M\times\mathbb{R}$ and coordinate base of $\mathbb{R}$, respectively.
	If $J$ is integrable then an almost contact structure is called normal.
	Moreover, if an almost contact structure of a contact metric manifold is normal then the contact metric manifold is called Sasakian manifold.
	The following equivalent definition is often useful.
	
	It is known that, for an almost Hermite manifold $(M;J,g)$, $\nabla J=0$ is equivalent to being a K\"{a}hlerian manifold, where $\nabla$ is the Riemannian connection with respect to $g$.
	Similarly, for an almost contact metric manifold $(M;\phi,\eta,\xi,g)$, the condition
	\begin{align}
	(\nabla_X\phi)Y=g(X,Y)\xi-\eta(Y)X,\label{eq nabla phi}
	\end{align}
	is equivalent to being a Sasakian manifold.
	From this we have the following.
	\begin{prop}
		A Sasakian manifold $(M;\phi,\eta,\xi,g)$ is K-contact.
	\end{prop}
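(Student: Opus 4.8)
The plan is to observe that a Sasakian manifold is by definition a contact metric manifold, so the only thing left to verify for the K-contact property is that the Reeb vector field $\xi$ is Killing, i.e. that $(L_\xi g)(X,Y)=g(\nabla_X\xi,Y)+g(\nabla_Y\xi,X)=0$ for all vector fields $X,Y$. The route to this is to first extract a clean formula for $\nabla_X\xi$ out of the Sasakian characterization \eqref{eq nabla phi}.

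First I would set $Y=\xi$ in \eqref{eq nabla phi}. Since $\phi\xi=0$ we have $(\nabla_X\phi)\xi=\nabla_X(\phi\xi)-\phi(\nabla_X\xi)=-\phi(\nabla_X\xi)$, while the right-hand side becomes $g(X,\xi)\xi-\eta(\xi)X=\eta(X)\xi-X$. Thus $\phi(\nabla_X\xi)=X-\eta(X)\xi$. Applying $\phi$ once more and using $\phi^2=-I+\eta\otimes\xi$ together with $\phi\xi=0$ turns the left side into $\phi^2(\nabla_X\xi)=-\nabla_X\xi+\eta(\nabla_X\xi)\xi$ and the right side into $\phi X$. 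Because $g(\xi,\xi)=1$ is constant, differentiating gives $g(\nabla_X\xi,\xi)=\eta(\nabla_X\xi)=0$, so the identity collapses to $\nabla_X\xi=-\phi X$.

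Next I would record the skew-symmetry of $\phi$ with respect to $g$, namely $g(\phi X,Y)=-g(X,\phi Y)$. This follows from the compatibility condition \eqref{eq metric condition}: replacing $X$ by $\phi X$ there and using $\phi^2=-I+\eta\otimes\xi$ and $\eta\circ\phi=0$ yields $g(\phi X,Y)=-g(X,\phi Y)$ once the $\eta$-terms drop out. Equivalently, this is just the antisymmetry of the fundamental 2-form $F(X,Y)=g(X,\phi Y)$.

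Finally I would combine the two facts. Substituting $\nabla_X\xi=-\phi X$ and $\nabla_Y\xi=-\phi Y$ gives $(L_\xi g)(X,Y)=-g(\phi X,Y)-g(\phi Y,X)$, and the skew-symmetry makes $g(\phi Y,X)=-g(\phi X,Y)$, so the two terms cancel and $L_\xi g=0$. Hence $\xi$ is Killing and $M$ is K-contact. The only genuinely delicate step is the derivation of $\nabla_X\xi=-\phi X$, where one must be careful to note the vanishing of $\eta(\nabla_X\xi)$ and the use of $\phi\xi=0$; once that formula is in hand the Killing property is immediate.
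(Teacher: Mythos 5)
Your argument is correct and follows essentially the same route as the paper: set $Y=\xi$ in \eqref{eq nabla phi}, use $\phi\xi=0$ and $\phi^2=-I+\eta\otimes\xi$ to derive $\nabla_X\xi=-\phi X$, and then conclude $L_\xi g=0$ from the skew-symmetry of $\phi$. You simply spell out the intermediate steps (the vanishing of $\eta(\nabla_X\xi)$ and the antisymmetry of the fundamental 2-form) that the paper leaves implicit.
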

	\begin{proof}
		In equation \eqref{eq nabla phi} we put $Y=\xi$ then we have $-\phi\nabla_X\xi=\eta(X)\xi-X$ since $\phi(\xi)=0$.
		And we obtain $\nabla_X\xi=-\phi X$ by acting $\phi$ on both hand sides of it.
		Therefore $g(\nabla_X\xi,Y)+g(\nabla_Y\xi,X)=-g(\phi X,Y)-g(\phi Y,X)=0$ holds.
	\end{proof}
	The converse holds only in three-dimension\cite{Blair}.
	Moreover, the following condition is known as necessary and sufficient condition of being K-contact for an odd dimensional Riemannian manifold\cite{K.Yano and M.Kon}.
	\begin{thm}
		[\cite{K.Yano and M.Kon} Chapter V Theorem 3.1]
		For an odd dimensional Riemannian manifold $(M,g)$, the necessary and sufficient condition of M being K-contact is (1) M admits a unit Killing vector field $\xi$, and (2) sectional curvature of arbitrary planes containing $\xi$ are equal to one.
		Then $\xi$ is a Reeb vector field of the contact metric structure.
		\label{appen thm k-contact}
	\end{thm}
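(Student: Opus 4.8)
The plan is to prove the equivalence in both directions. For the forward implication, assume $(M;\phi,\eta,\xi,g)$ is K-contact. Since $\eta(X)=g(X,\xi)$, taking $X=\xi$ gives $g(\xi,\xi)=\eta(\xi)=1$, so $\xi$ is a unit vector field, and it is Killing by the definition of K-contact; this is (1). For (2) I would first derive $\nabla_X\xi=-\phi X$. From $\eta=g(\,\cdot\,,\xi)$ and the torsion-free property one obtains the identity $2\,d\eta(X,Y)=g(\nabla_X\xi,Y)-g(\nabla_Y\xi,X)$; inserting the Killing relation $g(\nabla_Y\xi,X)=-g(\nabla_X\xi,Y)$ together with the contact condition $d\eta(X,Y)=F(X,Y)=g(X,\phi Y)=-g(\phi X,Y)$ reduces it to $g(\nabla_X\xi,Y)=-g(\phi X,Y)$ for all $Y$, hence $\nabla_X\xi=-\phi X$. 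For a unit $X$ orthogonal to $\xi$ the first proposition of Appendix~\ref{appen killing submersion} gives $K(X,\xi)=\|\nabla_X\xi\|^2=\|\phi X\|^2$, and \eqref{eq metric condition} yields $\|\phi X\|^2=g(X,X)-\eta(X)^2=1$, establishing (2).

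For the converse---the substantive direction---I would build the structure directly from $\xi$. Set $\eta(X):=g(X,\xi)$, define the endomorphism $SX:=\nabla_X\xi$, and put $\phi:=-S$. Because $\xi$ is unit Killing, $S$ is skew-symmetric, $S\xi=\nabla_\xi\xi=0$ (its integral curve is a geodesic), and $g(SX,\xi)=0$, so $S$ leaves the orthogonal complement $\xi^\perp$ invariant. The heart of the argument is to convert hypothesis (2) into $\phi^2=-I+\eta\otimes\xi$. By the Appendix identity $K(X,\xi)=\|SX\|^2$, hypothesis (2) says $\|SX\|=\|X\|$ for every unit $X\in\xi^\perp$, hence by homogeneity $S$ is norm-preserving on $\xi^\perp$; polarization makes $S|_{\xi^\perp}$ orthogonal, and an orthogonal skew endomorphism satisfies $S^2=-I$. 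Together with $S\xi=0$ this reads $S^2=-I+\eta\otimes\xi$, that is $\phi^2=-I+\eta\otimes\xi$, and in particular ${\rm rank}\,\phi=2n$ so that $\eta$ is a contact form.

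With $\phi^2$ in hand the remaining axioms are algebraic. To verify the compatibility \eqref{eq metric condition} I would decompose $X=X'+\eta(X)\xi$ with $X'\in\xi^\perp$, note $\phi X=\phi X'$, and use orthogonality of $S$ on $\xi^\perp$ to get $g(\phi X,\phi Y)=g(X',Y')=g(X,Y)-\eta(X)\eta(Y)$. For the contact condition $d\eta=F$ I would reuse $2\,d\eta(X,Y)=g(SX,Y)-g(SY,X)$; skew-symmetry of $S$ turns the right-hand side into $2g(SX,Y)=-2g(\phi X,Y)=2g(X,\phi Y)=2F(X,Y)$. Since $\xi$ is Killing by assumption, $(M;\phi,\eta,\xi,g)$ is then a contact metric manifold with Killing Reeb field, i.e. K-contact, and $\xi$ is its Reeb vector field.

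I expect the main obstacle to be exactly the middle step of the converse: promoting the skew endomorphism $\nabla\xi$ to an almost contact structure. Every other identity holds on any manifold carrying a unit Killing field and is a consequence of skew-symmetry and $\nabla_\xi\xi=0$; it is only the curvature normalization $K(X,\xi)=1$, routed through the Appendix formula $K(X,\xi)=\|\nabla_X\xi\|^2$, that supplies the rigidity $\|\nabla_X\xi\|=\|X\|$ on $\xi^\perp$ needed for $\phi^2=-I+\eta\otimes\xi$. A minor care point is the factor convention in $d\eta$, which must match the normalization making $F=d\eta$ the contact metric condition; with the standard convention the computation above closes without spurious constants.
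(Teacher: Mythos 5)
Your proof is correct and takes essentially the same route as the paper: both directions hinge on setting $\phi:=-\nabla\xi$ and converting hypothesis (2) through the unit-Killing-field identity $K(X,\xi)=\|\nabla_X\xi\|^2$ (equivalently $R(X,\xi)\xi=-\nabla_{\nabla_X\xi}\xi$), followed by the same $2\,d\eta(X,Y)=g(\nabla_X\xi,Y)-g(\nabla_Y\xi,X)$ computation for the contact condition. If anything you are slightly more careful than the paper: your polarization step upgrading $\|\phi X\|=\|X\|$ on $\xi^\perp$ to $\phi^2=-I+\eta\otimes\xi$ makes explicit what the paper elides when it passes directly from $g(-\phi^2X,X)=1$ to $\phi^2X=-X$, and your derivation of $\nabla_X\xi=-\phi X$ from $d\eta=F$ plus the Killing property is used only implicitly in the paper's forward direction.
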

	\begin{proof}
		We assume $(g,\xi,\eta,\phi)$ is K-contact.
		Since $\xi$ is a Killing vector field, we have $R(X,\xi)Y=\nabla_X\nabla_Y\xi-\nabla_{\nabla_XY}\xi$.
		Then we obtain $R(X,\xi)\xi=-\nabla_{\nabla_X\xi}\xi=-\phi^2X$ by putting $Y=\xi$.
		Therefore, for an arbitrary unit vector $X$ orthogonal to $\xi$, $K(X,\xi)=g(R(X,\xi)\xi,X)=-g(\phi^2X,X)=g(\phi X,\phi X)=g(X,X)=1$ holds.
		
		Conversely we assume the conditions (1),(2).
		We define $\eta(X):=g(X,\xi),\ \phi(X):=-\nabla_X\xi$ and $K(X,\xi)$ denotes the sectional curvature of plane spanned by $X$ and $\xi$.
		For an arbitrary unit vector $X$ orthogonal to $\xi$, since $1=K(X,\xi)=g(R(X,\xi)\xi,X)=g(-\phi^2X,X)$, we have $\phi^2X=-X$.
		Therefore $\phi^2=-I+\eta\otimes\xi$ holds since $\phi(\xi)=0$.
		And from $g(\phi X,\phi X)=g(\nabla_X\xi,\nabla_X\xi)=||A_X\xi||^2=K(X,\xi)||X||^2=g(X,X)$, we obtain the equation \eqref{eq metric condition}.
		Finally, for arbitrary vector fields $X,Y$, we have $2d\eta(X,Y)=X\eta(Y)-Y\eta(X)-\eta([X,Y])=g(\nabla_X\xi,Y)-g(\nabla_Y\xi,X)=2g(\nabla_X\xi,Y)=2g(X,\phi Y)$.
	\end{proof}
	Finally we show a Sasakian magnetic vector field of a three-dimensional Sasakian manifold coincides the Reeb vector field.
	For a three-dimensional almost contact metric manifold, the same holds (\cite{Cabrerizo et al 2009}Prop5.1)．
	\begin{prop}
		\label{appen prop sasaki and killing magnetic field}
		In a three-dimensional Sasakian manifold $(M;\phi,\eta,\xi,g)$ with the volume form $\Omega$, $g(X,\phi Y)=\iota_\xi\Omega(X,Y)$ holds.
	\end{prop}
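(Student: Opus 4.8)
The plan is to recognize that the asserted identity is equivalent to the pointwise relation $\phi X = X\wedge\xi$ already quoted in the main text, and then to verify it on an adapted orthonormal frame. First I would record that the fundamental 2-form $F(X,Y)=g(X,\phi Y)$ really is antisymmetric, i.e.\ that $\phi$ is skew-adjoint for $g$. This follows from the metric compatibility \eqref{eq metric condition} by substituting $Y\mapsto\phi Y$: since $\phi^2 Y=-Y+\eta(Y)\xi$ and $\eta\circ\phi=0$, the left side becomes $-g(\phi X,Y)$ while the right side becomes $g(X,\phi Y)$, so $g(X,\phi Y)=-g(\phi X,Y)$. Both $F$ and $\iota_\xi\Omega$ are therefore genuine 2-forms, and both are annihilated by $\xi$: one has $F(\xi,\,\cdot\,)=\eta(\phi\,\cdot\,)=0$ and $\iota_\xi(\iota_\xi\Omega)=0$.

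On a three-dimensional manifold the 2-forms killed by the fixed nowhere-vanishing field $\xi$ form a rank-one subbundle of $\Lambda^2 T^\ast M$, so I can conclude $F=\lambda\,\iota_\xi\Omega$ for some scalar function $\lambda$ and reduce the problem to computing $\lambda$ at a single pair of vectors. To do this I would build an adapted frame: choose a local unit field $e_1$ orthogonal to $\xi$ and set $e_2:=\phi e_1$. Using $\phi(\xi)=0$, $\eta\circ\phi=0$, the skew-adjointness just established, and \eqref{eq metric condition}, one checks that $\{e_1,e_2,\xi\}$ is orthonormal and that $\phi e_2=\phi^2 e_1=-e_1$. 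Hence $F(e_1,e_2)=g(e_1,\phi e_2)=-1$.

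It then remains to evaluate $\iota_\xi\Omega(e_1,e_2)=\Omega(\xi,e_1,e_2)$. With the volume form oriented compatibly with the contact structure --- the orientation for which the vector product satisfies $X\wedge\xi=\phi X$, equivalently $\Omega(\xi,e_1,e_2)=-1$ on the frame above --- this gives $\iota_\xi\Omega(e_1,e_2)=-1$, so $\lambda=1$ and the identity follows for all $X,Y$. I expect the only genuine obstacle to be the sign/orientation bookkeeping: both $\phi$ and the map $X\mapsto X\wedge\xi$ are skew endomorphisms squaring to $-I$ on the plane $\xi^\perp$, so they can differ at most by the orientation of that plane, and one must pin down $\Omega$ so that the two induced rotations agree. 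The skew-adjointness of $\phi$ and the verification that the adapted frame is orthonormal are otherwise routine consequences of the almost contact metric axioms.
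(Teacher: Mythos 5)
Your proof is correct, but it takes a genuinely different route from the paper's. The paper's argument is a two-line computation that leans on the Sasakian (K-contact) identity $\phi Y=-\nabla_Y\xi$ together with the fact, established earlier in the appendix, that $\nabla_Y\xi=\tau\, Y\wedge\xi$ with $\tau^2=K(Y,\xi)=1$; choosing the orientation so that $\tau=-1$ gives $g(X,\phi Y)=-g(X,\tau Y\wedge\xi)=\Omega(Y,\xi,X)=\iota_\xi\Omega(X,Y)$ directly. You instead never touch the connection: you show both $F$ and $\iota_\xi\Omega$ are 2-forms annihilated by $\xi$, observe that in dimension three such forms constitute a rank-one subbundle of $\Lambda^2T^*M$, and pin down the proportionality factor on the adapted frame $\{e_1,\phi e_1,\xi\}$. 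Your sign bookkeeping checks out: $F(e_1,\phi e_1)=-1$ and $\Omega(\xi,e_1,\phi e_1)=-1$ in the orientation for which $X\wedge\xi=\phi X$, which matches the paper's normalization $\tau=-1$. What your approach buys is generality and economy of hypotheses --- it uses only the almost contact metric axioms, so it actually proves the stronger statement the paper merely cites (that the identity holds on any three-dimensional almost contact metric manifold, \cite{Cabrerizo et al 2009} Prop.~5.1). What the paper's approach buys is brevity and coherence with the surrounding appendix, since it reuses the bundle-curvature function $\tau$ and ties the orientation convention to $\tau=-1$, which is the convention used elsewhere in the text.
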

	\begin{proof}
		We have $\tau^2=1$ in $\nabla_X\xi=\tau X\wedge\xi$.
		We can take a suitable orientation such that the bundle curvature $\tau=-1$.
		Then $g(X,\phi Y)=-g(X,\nabla_Y\xi)=-g(X,\tau Y\wedge\xi)=\Omega(Y,\xi,X)=\iota_\xi\Omega(X,Y)$ holds.
	\end{proof}

\end{document}